\let\temp\phi
\let\phi\varphi
\let\varphi\temp
\let\temp\epsilon
\let\epsilon\varepsilon
\let\varepsilon\temp
\let\Re\relax
\DeclareMathOperator{\Re}{Re}
\let\Im\relax
\DeclareMathOperator{\Im}{Im}
\DeclareMathOperator{\interior}{int}
\newtheorem{proposition}{Proposition}
\newtheorem{corollary}[proposition]{Corollary}
\newtheorem{theorem}[proposition]{Theorem}
\theoremstyle{definition}
\newtheorem*{definition}{Definition}
\newtheorem*{remark}{Remark}
\begin{document}
\title{Chabauty Limits of Fermat Spirals}
\author{Yohay Ailon Tevet}
\date{\today}
\maketitle

\begin{abstract}
A Fermat spiral is a set of points of the form \(\sqrt{n}e^{2\pi i\alpha n}\) for \(\alpha \in \mathbb{R}\). In this paper we prove that the Chabauty limits of Fermat spirals are always closed subgroups of \(\mathbb{R}^2\), and conclude that no Fermat spirals are dense forests. Furthermore, we show that if \(\alpha\) is badly approximable the Chabauty limits are always lattices, for which we give a characterisation.
\end{abstract}

\section{Introduction and Main Results}
A \emph{spiral set} is a set of the form \(\{\sqrt{n}u_n\;|\;n\in\mathbb{N}\}\) such that \(u_n \in \mathbb{S}^1\). A spiral set is called a \emph{Fermat spiral} if \(u_n = e^{2\pi i\alpha n}\) for some \(\alpha\in\mathbb{R}\). Fermat spirals, and, in particular, the \emph{sunflower spiral} (the Fermat spiral with constant angle difference \(\phi = (1+\sqrt{5})/2\), the golden ratio), have long been studied. In \cite{Vogel_1979}, Vogel studied a close relative of the sunflower spiral that is found in nature (specifically, with angle difference \(\phi^2\)), and in \cite{Marklof_2020}, Marklof studied spiral sets where the angle is of the form \(\alpha\sqrt{n}\).

A number \(\alpha\notin\mathbb{Q}\) with continued fraction convergents \(p_j/q_j\) is called \emph{badly-approximable} if 
\[0 < c_\alpha \coloneqq \liminf q_j|q_j\alpha - p_j| .\]

A discrete set \(X \subseteq \mathbb{R}^d\) is a \emph{dense forest} if
\begin{align}\label{eq:density}
    \limsup_{r\to\infty} \frac{\#(B_r(0)\cap X)}{r^d} < \infty
\end{align}
and there is a decreasing function \(V\colon (0,\infty)\to(0,\infty)\), called a \emph{visibility function}, such that for every \(\epsilon > 0\), every \(\epsilon\times V(\epsilon)\) rectangle intersects \(X\). In \cite{Akiyama_2020}, Akiyama showed that the Fermat spirals with constant angle difference are Delone if and only if the angle difference is badly approximable, and in \cite{Adiceam_2022} Adiceam and Tsokanos studied the visibility properties of general spiral sets, questions that are related to the open Danzer problem (see \cite{Adiceam_2021}). In this paper we show that no Fermat spiral is a dense forest, which answers a question posed by Adiceam and Tsokanos whether the sunflower spiral is a dense forest. In fact, we prove a stronger result about Chabauty limits of Fermat spirals. We do this by identifying all the Chabauty limits of some Fermat spirals.

Note that in the case that \(X\) is a spiral set the \(\limsup\) in \cref{eq:density} may be replaced by \(\lim\), and the limit is always \(1\).

In order to state our results more precisely, we introduce some terminology.

\begin{definition}
The \emph{Chabauty-Fell topology} of \(\mathbb{R}^d\) is the topology, on the set of closed sets of \(\mathbb{R}^d\), given by the metric
\[d(A, B) = \min\{1, \Delta(A, B) \}\]
where 
\[\Delta(A, B) = \inf \{ \epsilon > 0 \;|\; B_{1/\epsilon}(0) \cap A \subseteq \mathcal{N}_\epsilon(B), 
    B_{1/\epsilon}(0) \cap B \subseteq \mathcal{N}_\epsilon(A)\}\]
and \(\mathcal{N}_\epsilon(A)\) is the \(\epsilon\)-neighbourhood of \(B\) in the Euclidean norm, that is
\[\mathcal{N}_\epsilon(A) = \bigcup_{x\in A} B_\epsilon(x) .\]
It is known that this is a compact topological space.

A \emph{Chabauty limit} of a closed set \(X\subseteq \mathbb{R}^d\) is the limit (if it exists) of \(T_nX\), where \(T_n\) are translations of \(\mathbb{R}^d\) such that \(T_n(0) \to \infty\).
\end{definition}

We state now the main result, which will be proved later.
\begin{theorem}
\label{thm:chabauty_limits_of_fermat_spiral}
The non-empty Chabauty limits of a Fermat spiral are translations of closed subgroups of \(\mathbb{R}^2\).
\end{theorem}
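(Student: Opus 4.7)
My plan is to reduce to translations ``centred'' at a spiral point, factor out a global rotation, and then Taylor-expand the spiral near that point so that the limit's elements acquire additive parameters.

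First, given a non-empty Chabauty limit $Y$ of $F_\alpha$ arising from translations $T_N(x) = x + t_N$ with $\|t_N\| \to \infty$, I would pick $y_0 \in Y$ and indices $n_N \to \infty$ with $\sqrt{n_N}\, e^{2\pi i \alpha n_N} + t_N \to y_0$. The modified translations $T_N'(x) = x - \sqrt{n_N}\, e^{2\pi i\alpha n_N}$ differ from $T_N$ by $y_0 + o(1)$, so $T_N' F_\alpha \to Y - y_0$ and $0 \in T_N' F_\alpha$ for all $N$; it therefore suffices to prove $Y - y_0$ is a closed subgroup. Writing $\psi_N = 2\pi\alpha n_N \bmod 2\pi$ and passing to a subsequence with $\psi_N \to \psi_\infty$, I would factor $T_N' F_\alpha = e^{i\psi_N} S_N$, where $S_N = \{ f_N(k) : k \in \mathbb{Z},\ k \geq 1 - n_N\}$ and $f_N(k) = \sqrt{n_N + k}\, e^{2\pi i \alpha k} - \sqrt{n_N}$. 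Since a rotation of a closed subgroup is a closed subgroup, it is enough to show that the limit $S_\infty$ of $S_N$ is a closed subgroup of $\mathbb{C} \cong \mathbb{R}^2$.

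The crux is the Taylor expansion
\[
f_N(k) = \sqrt{n_N}\bigl(e^{2\pi i \alpha k} - 1\bigr) + \frac{k}{2\sqrt{n_N}}\, e^{2\pi i \alpha k} + O\!\left(\frac{k^2}{n_N^{3/2}}\right).
\]
Writing $\delta_N = \alpha k_N - \round(\alpha k_N)$, I would show that any $z \in S_\infty$ is a limit $f_N(k_N) \to z$ with both $v_N := k_N / \sqrt{n_N}$ and $u_N := 2\pi\sqrt{n_N}\, \delta_N$ bounded; up to a subsequence $(v_N, u_N) \to (v,u)$ and $z = v/2 + iu$. Now for $z_1, z_2 \in S_\infty$ realised by index sequences $k_N^{(1)}, k_N^{(2)}$, the sum $k_N := k_N^{(1)} + k_N^{(2)}$ is itself a valid index (since $|k_N| = O(\sqrt{n_N}) \ll n_N$) and the nearest-integer residues add without wraparound (since $\delta_N^{(j)} \to 0$), so both the $v$- and $u$-parameters add. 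Hence $f_N(k_N) \to z_1 + z_2$, giving $z_1 + z_2 \in S_\infty$. The mirror argument with $-k_N$ witnesses $-z \in S_\infty$, and $k_N \equiv 0$ gives $0 \in S_\infty$, so $S_\infty$ is a closed additive subgroup.

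The main obstacle will be self-consistency of the estimates: to make both the Taylor error $O(k_N^2/n_N^{3/2})$ vanish and the residues add without wraparound, I need the a priori bounds $|k_N| = O(\sqrt{n_N})$ and $|\delta_N| = O(1/\sqrt{n_N})$. I expect these to follow from the requirement that $f_N(k_N)$ stays bounded (each of the two main terms in the expansion must remain $O(1)$), but turning this into a clean quantitative estimate — uniform enough that summing two bounded index sequences stays in the same regime — is the fussy technical step of the proof.
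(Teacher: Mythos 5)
Your proposal is correct and follows essentially the same route as the paper: recentre the translations at a spiral point, observe that boundedness of the displacement forces \(k_N = O(\sqrt{n_N})\) and \(e^{2\pi i\alpha k_N}\to 1\), split the displacement into a radial and an angular part, and deduce additivity from additivity of the index \(k\) (the paper cancels the two differences directly rather than extracting your explicit \((v,u)\)-coordinates, but the decomposition and key estimates are the same). The technical step you flag is not actually delicate: \(|f_N(k_N)|\le C\) gives \(|\sqrt{n_N+k_N}-\sqrt{n_N}|\le C\), hence \(|k_N|\le 2C\sqrt{n_N}+C^2\), and then boundedness of the remaining term \(\sqrt{n_N}(e^{2\pi i\alpha k_N}-1)\) gives \(\delta_N=O(1/\sqrt{n_N})\); these bounds simply add (with larger constants) when you sum two index sequences, so the regime is preserved.
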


Intuitively, the Chabauty topology measures how close two closed sets are in a big neighbourhood around the origin. The Chabauty limits can likewise be thought of as how the set \(X\) looks as we go farther from the origin. It is easy to show that the only Chabauty limits of a Fermat spiral with a rational \(\alpha\) are the empty set and lines. In the case that \(\alpha\) is not rational and not badly-approximable little is known, however, the following theorem states that in the case where \(\alpha\) is badly-approximable the Chabauty limits can be well understood.

\begin{theorem}
\label{thm:catergorisation_of_badly_approximable_limits}
Let \(X\) be a Fermat spiral with badly-approximable angle difference \(\alpha\), and let \((\beta, c, \tilde{c})\) be the limit of some subsequence of
\[\bigl(q_{j+1}/q_j, q_j(q_j\alpha - p_j), q_{j+1}(q_{j+1}\alpha-p_{j+1}) \bigr)\]
where \(p_j/q_j\) are the convergents of \(\alpha\). The Chabauty limits of \(X\) are precisely translates of the lattices
\[\Lambda = \sqrt{\pi} R A_t \begin{bmatrix}
        1 & c \\ \beta & \tilde{c}/\beta
    \end{bmatrix} \mathbb{Z}^2\]
for every such a triplet \((\beta, c, \tilde{c})\), rotation matrix \(R\), and \(0 < t\), where \(A_t = \left[\begin{smallmatrix}t\\&1/t\end{smallmatrix}\right]\).
\end{theorem}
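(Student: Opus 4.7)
The plan is first to apply \cref{thm:chabauty_limits_of_fermat_spiral}, which identifies every non-empty Chabauty limit of \(X\) as a translate of a closed subgroup \(\Lambda \subseteq \mathbb{R}^2\); since \(X\) has density \(1\), such \(\Lambda\) is forced to be a lattice of covolume \(1\). Thereafter it remains to pin down \(\Lambda\) as a lattice, and since translates of \(X\) give translates of \(\Lambda\) leaving the underlying group unchanged, it suffices to compute the Chabauty limit of \(X - x_N\) for \(x_N = \sqrt{N}\,e^{2\pi i\alpha N} \in X\) along a subsequence \(N \to \infty\).

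The computational heart is the expansion
\[
\sqrt{N+m}\,e^{2\pi i\alpha(N+m)} - \sqrt{N}\,e^{2\pi i\alpha N} \;=\; e^{2\pi i\alpha N}\Bigl(\sqrt{N}\bigl(e^{2\pi i\alpha m}-1\bigr) + \tfrac{m}{2\sqrt{N}}\,e^{2\pi i\alpha m}\Bigr) + O\bigl(m^2/N^{3/2}\bigr),
\]
which remains bounded as \(N \to \infty\) precisely when \(|m| = O(\sqrt{N})\) and \(\|\alpha m\| = O(1/\sqrt{N})\). Under the badly-approximable hypothesis the only such \(m\) are bounded integer combinations of the continued fraction denominators \(q_j,\, q_{j+1}\), where \(j = j(N)\) is chosen so \(q_j \asymp \sqrt{N}\). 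Writing \(\delta_j = q_j\alpha - p_j\), the choices \(m = q_j\) and \(m = q_{j+1}\) produce the two leading generators
\[
v_1 \sim \bigl(\tfrac{q_j}{2\sqrt{N}},\, 2\pi\sqrt{N}\,\delta_j\bigr), \qquad v_2 \sim \bigl(\tfrac{q_{j+1}}{2\sqrt{N}},\, 2\pi\sqrt{N}\,\delta_{j+1}\bigr),
\]
and Legendre's identity \(q_j p_{j+1} - q_{j+1} p_j = \pm 1\) gives \(\det(v_1,v_2) = \pm\pi\), so they span a rank-\(2\) sublattice. Passing to a subsequence along which \(e^{2\pi i\alpha N} \to e^{i\theta}\), \(\sqrt{N}/q_j \to s \in (0,\infty)\), and \((q_{j+1}/q_j,\, q_j\delta_j,\, q_{j+1}\delta_{j+1}) \to (\beta, c, \tilde c)\) — all possible by compactness, the triple being bounded because \(\alpha\) is badly-approximable — substitutes these limits into \((v_1,v_2)\) and, after absorbing \(\theta\) into the rotation \(R\) and setting \(t = 1/(2s\sqrt{\pi})\), yields generators matching the theorem's lattice modulo a \(GL_2(\mathbb{Z})\) change of basis.

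The main obstacle, and the place where the badly-approximable hypothesis does real work, is \emph{completeness}: ruling out any point of the Chabauty limit outside the \(\mathbb{Z}\)-span of \(v_1, v_2\). This reduces to the uniform claim that any integer \(m\) with \(|m| \lesssim \sqrt{N}\) and \(\|\alpha m\| \lesssim 1/\sqrt{N}\) is a bounded integer combination of \(q_{j-1}, q_j, q_{j+1}\), which follows from the three-distance theorem together with the bound on \(q_{j+1}/q_j\). The converse inclusion — realising every admissible \((\beta, c, \tilde c, R, t)\) as an actual Chabauty limit — is handled by a diagonal construction: equidistribution of \(\alpha N \bmod 1\) produces any desired rotation, and \(N\) can be chosen freely in the window \([q_j^2, q_{j+1}^2]\) along a subsequence of \(j\)'s realising \((\beta, c, \tilde c)\) to attain any prescribed \(t > 0\).
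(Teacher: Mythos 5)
Your overall architecture mirrors the paper's: identify the generators of the limit lattice as limits of \(x_{N+q_j}-x_N\) and \(x_{N+q_{j+1}}-x_N\), compute them via the expansion \(\sqrt{N+m}-\sqrt{N}\sim m/(2\sqrt N)\) together with \(q_j\alpha-p_j\sim c/q_j\), get covolume \(\pi\) from \(p_{j+1}q_j-p_jq_{j+1}=\pm1\), and then realise all parameters by choosing \(N\) and exploiting equidistribution of \(\alpha N\). One step is genuinely different and worth noting: for completeness (no lattice point outside \(\mathbb{Z}v_1+\mathbb{Z}v_2\)) the paper argues geometrically --- any candidate \(x_{n_j+a_j}\) with \(0\le a_j<\tilde q_j\) would have to lie in an angular sector of half-width \(2\pi|q_j\alpha-p_j|\) about \(x_{n_j}\), contradicting the best-approximation property of convergents --- whereas you reduce to the Diophantine claim that every \(m\) with \(|m|\lesssim\sqrt N\) and \(\|\alpha m\|\lesssim 1/\sqrt N\) is a bounded integer combination of nearby \(q_i\)'s. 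That claim is true (it is the Ostrowski/best-approximation property, not really the three-distance theorem, and you must also pass to a subsequence along which the partial quotients stabilise so that the limit vector attached to \(q_{j-1}=q_{j+1}-a_{j+1}q_j\) lands in \(\mathbb{Z}v_1+\mathbb{Z}v_2\)), but as written it is asserted rather than proved, and it is the crux of the theorem.

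There are two concrete errors. First, the realisability of arbitrary \(t\): choosing \(N\in[q_j^2,q_{j+1}^2]\) only gives \(\sqrt N/q_j\in[1,q_{j+1}/q_j]\to[1,\beta]\), hence \(t\) confined to a compact subinterval of \((0,\infty)\), not every \(t>0\). The paper instead takes \(n_j=aq_jq_{j+1}\) with \(a=1/(4t^2\beta)\) arbitrary, which does give all \(t\); one then needs the covolume argument (the limit is a lattice of covolume \(\pi\) containing the covolume-\(\pi\) sublattice generated by \(v',\tilde v'\), hence equals it) to rule out a proper superlattice, a point your realisability paragraph omits. Second, a normalisation slip: with the paper's density \(\#(B_r(0)\cap X)/r^2\to 1\), the forced covolume is \(\pi\), not \(1\) (you later compute \(\det(v_1,v_2)=\pm\pi\), contradicting your opening claim); and the deduction ``density \(1\) forces a lattice'' is not how this follows --- Chabauty limits need not preserve density, and the correct route is that \(X\) is Delone for badly-approximable \(\alpha\) (Akiyama), Delone-ness passes to Chabauty limits, and the only Delone closed subgroups of \(\mathbb{R}^2\) are lattices. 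These are all repairable, but the window claim as stated does not yield the full family of lattices in the theorem.
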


In the case of \(\alpha = \phi\) we have \(\beta = \phi\) and \(c = -\tilde{c} = \pm1/\sqrt{5}\), therefore the Chabauty limits of the sunflower spiral are of the form
\[\Lambda = \sqrt{\pi} R A_t \begin{bmatrix}
        1 & \pm1/\sqrt{5} \\ \phi & \mp1/(\sqrt{5}\phi)
    \end{bmatrix} \mathbb{Z}^2 .\]

\section{Applications of \Cref{thm:chabauty_limits_of_fermat_spiral}}
Throughout the paper we use the following characterisation of the Chabauty limit:

\begin{proposition}
Let \(X\subseteq \mathbb{R}^d\) be a closed set and \(T_n\) be translations such that \(T_nX\) converges to \(\tilde{X}\) in the Chabauty topology. For a point \(x\in\mathbb{R}^d\) we have that \(x\in\tilde{X}\) if and only if there is a sequence of points \(x_n\in T_nX\) that converges to \(x\).
\end{proposition}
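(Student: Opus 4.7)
The plan is to unpack the definition of the Chabauty metric and prove each direction separately. Since $d(T_n X, \tilde X) \to 0$, eventually $d = \Delta$ and hence $\Delta(T_n X, \tilde X) \to 0$. I would then pick a sequence $\epsilon_n \to 0$ with $\Delta(T_n X, \tilde X) < \epsilon_n$ for all $n$ large, which yields simultaneously both inclusions
\[B_{1/\epsilon_n}(0) \cap T_n X \subseteq \mathcal{N}_{\epsilon_n}(\tilde X),\qquad B_{1/\epsilon_n}(0) \cap \tilde X \subseteq \mathcal{N}_{\epsilon_n}(T_n X).\]
The two directions of the claim correspond one-to-one to these two inclusions.

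For the ``only if'' direction, given $x \in \tilde X$, I fix any $R > |x|$ and, for all $n$ large enough that $1/\epsilon_n > R$, use the second inclusion (with $x \in B_R(0) \cap \tilde X$) to extract $x_n \in T_n X$ with $|x_n - x| < \epsilon_n$. For the finitely many small indices I pick $x_n \in T_n X$ arbitrarily (the translates are nonempty as soon as $X$ is). The resulting sequence converges to $x$.

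For the ``if'' direction, assume $x_n \in T_n X$ with $x_n \to x$. Fix $\epsilon > 0$. For all $n$ large enough so that both $|x_n| < 1/\epsilon$ and $\Delta(T_n X, \tilde X) < \epsilon$ hold, the first inclusion produces $y_n \in \tilde X$ with $|y_n - x_n| < \epsilon$, hence $|y_n - x| < 2\epsilon$ eventually. Therefore $x$ lies in the closure of $\tilde X$, and closedness of $\tilde X$ (a standard by-product of the definition of $\Delta$, and implicit in the characterisation of the Chabauty-Fell topology as a topology on \emph{closed} sets) concludes the proof.

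No substantive obstacle is expected here: the metric $\Delta$ is tailored to encode precisely this pointwise-on-large-balls notion of convergence, so the argument is in essence careful bookkeeping of quantifiers. The only mildly delicate step is aligning the two rates $1/\epsilon_n$ (radius at which we get containment) and $|x|$ or $|x_n|$ (where the points actually live), which is what forces the asymmetric handling of the two directions above.
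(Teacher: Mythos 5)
Your proposal is correct and follows essentially the same route as the paper: each direction of the equivalence is obtained from the corresponding inclusion in the definition of \(\Delta\), with the ``if'' direction concluded by the closedness of \(\tilde{X}\). The only cosmetic difference is that the paper selects the nearest point of the relevant closed set where you extract an arbitrary witness from the \(\epsilon\)-neighbourhood, and you are slightly more explicit about discarding the finitely many indices where the radius bound fails.
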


\begin{proof}
Assume that \(x\in\tilde{X}\), then for any \(n\) denote by \(x_n\) the point closest to \(x\) in \(T_nX\) (there must be such a point since \(T_nX\) is closed). For every \(0 < \epsilon < 1\) and \(\|x\| < 1/\epsilon\), for some \(N\) and every \(N\leq n\), we have that \(d(X, T_nX) < \epsilon\), therefore \(x \in \mathcal{N}_\epsilon(T_nX)\) and so \(\|x_n - x\| < \epsilon\).

In the other direction, we have a sequence \(x_n \in T_nX\) that converges to \(x\), and denote by \(\tilde{x}_n \in \tilde{X}\) the point closest to \(x_n\). For every \(0 < \epsilon < 1\) there is \(N_1\) such that for every \(N_1 \leq n\) we have \(\|x-x_n\| < \epsilon\); similarly, there is \(N_2\) such that for every \(N_2 \leq n\) we have \(x_n \in \mathcal{N}_\epsilon(\tilde{X})\), therefore \(\|\tilde{x}_n-x_n\| < \epsilon\). For \(N = \max\{N_1, N_2\}\) and \(N\leq n\) we have
\[\|\tilde{x}_n - x\| \leq \|\tilde{x}_n - x_n\| + \|x_n - x\| < 2\epsilon\]
and so we have \(\tilde{x}_n \to x\) and since \(\tilde{X}\) is closed \(x\in\tilde{X}\).
\end{proof}

\begin{definition}
A discrete set \(X\subseteq \mathbb{R}^d\) is called \emph{relatively dense} if there is some \(r > 0\) such that for any \(x\in\mathbb{R}^d\) we have \(B_r(x) \cap X \neq\emptyset\); it is called \emph{uniformly discrete} if there is some \(s > 0\) such that for any \(x\in\mathbb{R}^d\) we have  \(|B_s(x) \cap X| \leq 1\); it is called \emph{Delone} if it is both relatively dense and uniformly discrete.
\end{definition}

\begin{proposition}
\label{prop:delone_chabauty}
If \(X\) is Delone, then every Chabauty limit of it is Delone.
\end{proposition}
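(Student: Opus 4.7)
The plan is to prove both uniform discreteness and relative density are preserved under Chabauty limits, using the sequential characterisation just proved.

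First I would note that translations preserve both relative density and uniform discreteness, so if \(X\) is Delone with constants \(r\) (relative density) and \(s\) (uniform discreteness), then every \(T_nX\) is Delone with the same \(r\) and \(s\). So we only need to transfer these two properties to the limit \(\tilde{X}\).

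For relative density, I would fix an arbitrary \(x\in\mathbb{R}^d\) and pick, for each \(n\), a point \(y_n \in T_nX \cap \overline{B_r(x)}\), which exists by relative density of \(T_nX\). Since \(\overline{B_r(x)}\) is compact, some subsequence \(y_{n_k}\) converges to a point \(y \in \overline{B_r(x)}\). Because the subsequence \(T_{n_k}X\) still converges to \(\tilde{X}\), the preceding proposition (applied to the subsequence) gives \(y \in \tilde{X}\); hence \(\tilde{X} \cap \overline{B_r(x)} \neq \emptyset\), so \(\tilde{X}\) is relatively dense with the same constant (up to taking a slightly larger \(r\)).

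For uniform discreteness, I would argue by contradiction: assume there are distinct \(y_1, y_2 \in \tilde{X}\) with \(\|y_1 - y_2\| < 2s\). By the preceding proposition, pick sequences \(y_i^n \in T_nX\) with \(y_i^n \to y_i\) for \(i = 1, 2\). Setting \(\delta = \|y_1 - y_2\| > 0\), for all sufficiently large \(n\) we have \(\|y_i^n - y_i\| < \delta/4\), so \(y_1^n \neq y_2^n\) and their distance is still strictly less than \(2s\). Then the midpoint of \(y_1^n\) and \(y_2^n\) witnesses \(|B_s(\cdot) \cap T_nX| \geq 2\), contradicting uniform discreteness of \(T_nX\).

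I do not expect any real obstacle here: the only subtle point is that the sequential characterisation proved earlier is phrased for the original sequence \(T_nX\), while relative density is most naturally extracted along a subsequence. This is resolved immediately by the fact that Chabauty convergence passes to subsequences, so the proposition applies verbatim to \(T_{n_k}X \to \tilde{X}\).
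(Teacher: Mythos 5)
Your proof is correct and follows essentially the same route as the paper's: compactness of \(\overline{B_r(x)}\) plus the sequential characterisation for relative density, and transfer of the two-points-in-a-small-ball obstruction for uniform discreteness (which the paper merely declares ``clear''). One nitpick in the second part: with \(\|y_i^n - y_i\| < \delta/4\) you only get \(\|y_1^n - y_2^n\| < 3\delta/2\), which need not be less than \(2s\) when \(\delta\) is close to \(2s\); take the tolerance smaller than \((2s-\delta)/2\) so that the midpoint ball of radius \(s\) really captures both points.
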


\begin{proof}
Let \(X\) be a Delone set and \(T_n\) be translations such that \(T_nX\) converges to \(\tilde{X}\). There is some \(r\) such that \(X\) is \(r\)-relatively dense, therefore for any \(x\in\mathbb{R}^d\) and any translation \(T_n\) we have \(y_n \in B_r(x) \cap T_nX\), so the sequence \(y_n\) has a converging subsequence in \(\overline{B_r(x)}\), and the limit of it must be in \(\tilde{X}\), therefore \(\tilde{X}\cap B_{r+1}(x) \neq \emptyset\).

Similarly, there is some \(s\) such that \(X\) is \(s\)-uniformly discrete, therefore for any \(x\in\mathbb{R}^d\) and any translation \(T_n\) we have \(|B_s(x)\cap T_nX| \leq 1\), so it is clear that \(|B_s(x) \cap \tilde{X}| \leq 1\).
\end{proof}

As a consequence of \Cref{thm:chabauty_limits_of_fermat_spiral} we obtain:
\begin{corollary}
\label{cor:chabauty_limits_of_badly_approximable_fermat_spiral}
The Chabauty limits of a Fermat spiral with badly-approximable angle difference are translations of lattices.
\end{corollary}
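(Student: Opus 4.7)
The plan is to combine \Cref{thm:chabauty_limits_of_fermat_spiral} with \Cref{prop:delone_chabauty} and the already-cited result of Akiyama that a Fermat spiral with badly-approximable angle difference is Delone. Once these three facts are in hand, the corollary should follow from the standard classification of closed subgroups of \(\mathbb{R}^2\).

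More concretely, let \(X\) be a Fermat spiral with badly-approximable angle difference and let \(\tilde{X}\) be a non-empty Chabauty limit (the empty set is vacuously excluded, or one may note that the Delone property will force non-emptiness). Akiyama's theorem gives that \(X\) is Delone, and then \Cref{prop:delone_chabauty} gives that \(\tilde{X}\) is Delone as well; in particular \(\tilde{X}\) is discrete, relatively dense, and non-empty. On the other hand, \Cref{thm:chabauty_limits_of_fermat_spiral} tells us that \(\tilde{X} = v + H\) for some closed subgroup \(H \leq \mathbb{R}^2\) and some translation vector \(v\in\mathbb{R}^2\). Since Delone properties are translation invariant, \(H\) is itself a Delone closed subgroup of \(\mathbb{R}^2\).

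The last step is to invoke the structure theorem for closed subgroups of \(\mathbb{R}^d\): any such subgroup decomposes as \(V \oplus \Lambda\) with \(V\) a linear subspace of \(\mathbb{R}^d\) and \(\Lambda\) a discrete free abelian subgroup of a complementary subspace. Discreteness of \(H\) forces \(V = 0\), so \(H\) is a free abelian group of some rank \(k\leq 2\); relative density forces \(k = 2\), so \(H\) is a full-rank lattice in \(\mathbb{R}^2\). Thus \(\tilde{X}\) is a translate of a lattice, as claimed.

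There is no real obstacle here beyond correctly invoking these ingredients; the only mild care-point is ensuring that the subgroup produced by \Cref{thm:chabauty_limits_of_fermat_spiral} inherits the Delone property from its translate, which is immediate. In particular we do not need to touch the more delicate \Cref{thm:catergorisation_of_badly_approximable_limits} for this statement — that finer result will later describe exactly \emph{which} lattices arise.
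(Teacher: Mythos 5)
Your proposal is correct and follows essentially the same route as the paper: invoke Akiyama's result that the spiral is Delone, \Cref{prop:delone_chabauty} to transfer the Delone property to the limit, \Cref{thm:chabauty_limits_of_fermat_spiral} to see the limit is a translated closed subgroup, and then the classification of closed subgroups of \(\mathbb{R}^2\) to conclude that the only Delone ones are lattices. The paper phrases the last step by listing the six isomorphism types of closed subgroups rather than citing the \(V\oplus\Lambda\) structure theorem, but this is only a cosmetic difference.
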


\begin{proof}
There are exactly six closed subgroups of \(\mathbb{R}^2\) (up to isomorphism): \(0\), \(\mathbb{Z}\), \(\mathbb{R}\), \(\mathbb{Z}^2\), \(\mathbb{Z}\times\mathbb{R}\), \(\mathbb{R}^2\). In \cite{Akiyama_2020}, Akiyama showed that for a badly-approximable angle difference, the Fermat spiral is Delone, therefore the limits must be Delone as well, and the only Delone closed subgroups of \(\mathbb{R}^2\) are the lattices.
\end{proof}

\begin{proposition}
\label{prop:dense_forest_chabauty}
For a set \(X\subseteq \mathbb{R}^d\), the following are equivalent:
\begin{enumerate}
    \item \(X\) is not a dense forest;
    \item \(X\) has a Chabauty limit with an infinite strip with no points (i.e. there is an \(\epsilon\) and there is a line such that there are no points \(\epsilon\)-close to it);
    \item \(X\) has a Chabauty limit with a half-infinite strip with no points (i.e. there is an \(\epsilon\) and there is a ray such that there are no points \(\epsilon\)-close to it).
\end{enumerate}
\end{proposition}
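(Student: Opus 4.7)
My plan is to prove the two equivalences $(2)\Leftrightarrow(3)$ and $(1)\Leftrightarrow(3)$ separately. The direction $(2)\Rightarrow(3)$ is immediate, since any infinite empty strip contains a half-infinite sub-strip. For $(3)\Rightarrow(2)$, I would start with a Chabauty limit $\tilde{X}$ having an empty $\epsilon$-neighbourhood of a ray $\{p+tv:t\geq 0\}$ and upgrade it to a full empty line by translating $\tilde{X}$ by $-kv$ for $k\to\infty$: each such translate preserves the emptiness and extends the empty region backwards to $t\geq -k$. Since a translation of a Chabauty limit of $X$ is itself a Chabauty limit of $X$ (by a standard diagonal argument on the defining translations $T_n$), and by compactness of the Chabauty topology one may extract a subsequential limit of these further translates, the resulting Chabauty limit has an empty $\epsilon$-neighbourhood of the entire line through $p$ in direction $v$.

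For $(3)\Rightarrow(1)$, the Chabauty convergence restricted to compact segments of the empty ray forces $T_n X$, and hence $X$ itself up to translation, to contain empty $(\epsilon/2)\times L$ rectangles for every $L$, ruling out any visibility function at width $\epsilon/2$. The main direction is $(1)\Rightarrow(3)$: assuming $X$ is not a dense forest, failure of visibility at some $\epsilon>0$ provides empty $\epsilon\times n$ rectangles $R_n$ in $X$ with unit directions $v_n\in\mathbb{S}^{d-1}$. I then translate $X$ by a point near one end of $R_n$, so that the translated rectangle has one end at the origin and extends in direction $-v_n$, and extract subsequences along which $v_n\to v$ (unit-sphere compactness) and $T_n X$ converges in the Chabauty topology (Chabauty compactness). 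The resulting Chabauty limit has an empty $\epsilon$-neighbourhood of a ray, which is precisely $(3)$.

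The main obstacle is ensuring in $(1)\Rightarrow(3)$ that the translations genuinely escape to infinity: if the centres of the rectangles $R_n$ remain bounded, their endpoints at distance $n/2$ from the centres must themselves go to infinity, which justifies translating to an endpoint rather than to the centre. With a slightly more careful choice of base point interior to $R_n$ one can even obtain an infinite empty strip directly, giving $(1)\Rightarrow(2)$ without passing through $(3)\Rightarrow(2)$. A short auxiliary case-split handles the alternative where $X$ fails the density condition of dense forests without failing visibility: if $X$ is not relatively dense, arbitrarily large empty balls yield the empty Chabauty limit, which vacuously satisfies $(3)$; otherwise relative denseness combined with the local finiteness of $X$ forces some Chabauty limit to retain a gap, giving an empty strip.
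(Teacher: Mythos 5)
Your treatment of the visibility clause is essentially the paper's argument. The paper closes the cycle \((1)\Rightarrow(2)\Rightarrow(3)\Rightarrow(1)\) by translating the centre of an empty \(\epsilon\times n\) rectangle to the origin, passing to subsequences in the direction and in the (compact) Chabauty topology, and conversely extracting arbitrarily long empty rectangles from a limit with an empty half-strip; your \((1)\Rightarrow(3)\) and \((3)\Rightarrow(1)\) are the same computations, and your direct \((3)\Rightarrow(2)\), translating the limit along the ray and using that the set of Chabauty limits is closed, is a valid (if slightly redundant) substitute for the paper's trivial \((2)\Rightarrow(3)\). Your point that one must ensure \(T_n(0)\to\infty\), fixed by basing the translation at a point of \(R_n\) far from the possibly bounded centres, is a detail the paper glosses over, and your fix is correct.

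The genuine problem is your closing case-split. ``\(X\) is not a dense forest'' can also occur because the density condition \(\limsup_{r\to\infty}\#(B_r(0)\cap X)/r^d<\infty\) fails, i.e.\ because \(X\) has too \emph{many} points, not too few; you conflate this with failure of relative denseness. A set can violate the density bound while being uniformly relatively dense and admitting a visibility function (take a Delone dense forest and adjoin increasingly fine grids in distant annuli): then every Chabauty limit is relatively dense by the argument of \Cref{prop:delone_chabauty}, so no limit has an empty strip or half-strip, neither of your two sub-cases applies as claimed, and in fact \((1)\Rightarrow(3)\) is simply false for such a set. In particular the assertion that ``relative denseness combined with local finiteness forces some Chabauty limit to retain a gap'' is not justified and not true. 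This branch cannot be repaired; the equivalence only holds if ``not a dense forest'' is read as ``fails the visibility condition.'' To be fair, the paper's own proof silently makes that reading (it opens with ``if \(X\) is not a dense forest, there is some \(\epsilon>0\) such that for every \(V>0\) there is an \(\epsilon\times V\) rectangle missing \(X\)''), and the issue is harmless for the applications since spiral sets have asymptotic density \(1\); but your attempted patch asserts something incorrect rather than leaving the gap implicit, so it should be deleted or replaced by an explicit hypothesis on the density of \(X\).
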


\begin{proof}
For \((1\implies2)\), if \(X\) is not a dense forest, there is some \(\epsilon > 0\) such that for every \(V > 0\) there is \(\epsilon\times V\) rectangle that does not intersect with \(X\). Let \(T_n\) be the transformation that translates the centre of such a rectangle for \(V = n\) to the origin. Let \(\theta_n\) be the angle the long side of the rectangle makes with the horizontal direction, then we may assume that \(\theta_n\) converges by passing to a subsequence. We now may pass to a converging subsequence of \(T_nX\) since the Chabauty topology is compact, and the limit must have an infinite strip with no points.

The implication \((2\implies3)\) is trivial.

For \((3\implies1)\), assume there is such a Chabauty limit \(\tilde{X}\) with a half-infinite strip with no points of width \(\epsilon\), achieved by the translations \(T_n\). For every \(V > 0\) there is \(n\) such that \(d(\tilde{X}, T_nX) < \min\{\epsilon/2, 1/(2V)\}\), this means there is a \(V\times(\epsilon/2)\) rectangle that does not intersect \(T_nX\), so there must be such rectangle that does not intersect \(X\).
\end{proof}

\begin{corollary}
\label{cor:no_fermat_spiral_is_a_dense_forest}
No Fermat spiral is a dense forest.
\end{corollary}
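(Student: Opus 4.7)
The plan is to combine \Cref{thm:chabauty_limits_of_fermat_spiral} with \Cref{prop:dense_forest_chabauty}: by the latter it suffices to produce one Chabauty limit of $X$ whose complement contains an infinite strip. By the compactness of the Chabauty-Fell topology, $X$ admits at least one Chabauty limit; indeed, pick a sequence of translations $T_n$ that sends $\sqrt{n}u_n\in X$ to the origin and extract a Chabauty-convergent subsequence of $T_nX$. The resulting limit $\tilde X$ contains $0$, so it is non-empty.

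By \Cref{thm:chabauty_limits_of_fermat_spiral}, $\tilde X$ is a translate of a closed subgroup $G\subseteq\mathbb R^2$. The closed subgroups of $\mathbb R^2$ are $\{0\}$, $\mathbb Z$, $\mathbb Z^2$, $\mathbb R$, $\mathbb Z\times\mathbb R$ and $\mathbb R^2$. Each of these, except $\mathbb R^2$ itself, is either discrete or a disjoint union of finitely many parallel lines, so in every such case one can write down an infinite strip in the complement of $G$ directly: for discrete $G$ one takes a sufficiently thin strip parallel to a primitive generator and placed strictly between two adjacent ``rows'' of $G$, while for $G\in\{\mathbb R,\mathbb Z\times\mathbb R\}$ one takes a thin strip parallel to the generating line and displaced off of $G$. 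Hence, after translating back, $\tilde X$ has an infinite strip in its complement and \Cref{prop:dense_forest_chabauty} finishes the proof---provided we can rule out $G=\mathbb R^2$.

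The remaining obstacle is therefore to exclude $G=\mathbb R^2$, and this is where the real work sits. The strategy is to establish a uniform density bound for the Fermat spiral of the form $|B_r(x)\cap X|=O(r^2)$ valid for every $x\in\mathbb R^2$ and $r>0$. The starting point is the exact count $|B_r(0)\cap X|=\lfloor r^2\rfloor$, plus some control over the angular coordinates $u_n=e^{2\pi i\alpha n}$: Weyl equidistribution when $\alpha\notin\mathbb Q$, and the fact that the points lie on finitely many rays when $\alpha\in\mathbb Q$. For $|x|$ large, $B_r(x)\cap X$ is confined to a thin annulus, and the equidistribution (or the ray structure) ensures that only a small angular fraction of the annulus lies in $B_r(x)$, giving the uniform $O(r^2)$ bound. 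By the pointwise sequence characterisation of Chabauty convergence, this bound passes to $\tilde X$, forcing $\tilde X$ to be locally finite and thereby excluding $\tilde X=\mathbb R^2$. The main technical difficulty is making the uniform density bound precise when $\alpha$ is irrational but not badly approximable, since then $X$ is not known to be uniformly discrete; once that is settled, the corollary follows immediately.
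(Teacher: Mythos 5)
There is a genuine gap, and it sits exactly where you place the ``main technical difficulty'': excluding the Chabauty limit \(\mathbb{R}^2\). The uniform density bound \(|B_r(x)\cap X|=O(r^2)\) that your strategy rests on is in fact \emph{false} for sufficiently well-approximable \(\alpha\). If \(p/q\) is a convergent with \(\eta=|q\alpha-p|\) extremely small (say \(\eta<q^{-10}\), as happens infinitely often for a Liouville number), consider the points \(x_{n_0+kq}\) for \(0\le k\le K\): their angles \(2\pi\alpha(n_0+kq)\) differ by at most \(2\pi K\eta \pmod{2\pi}\) and their radii span about \(Kq/(2\sqrt{n_0})\). Choosing \(\sqrt{n_0}\approx Kq\) and \(K\approx (q\eta)^{-1/2}\), all \(K+1\) points land in a ball of radius \(O(1)\), with \(K\) unbounded as \(q\to\infty\). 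So no uniform local count with an absolute constant exists, Weyl equidistribution notwithstanding (equidistribution controls averages over long ranges of \(n\), not counts in the short arithmetic progressions \(n_0+kq\) that cause the clustering). Note that the paper itself leaves the statement ``the limit cannot be \(\mathbb{R}^2\) for non-badly-approximable \(\alpha\)'' as an explicit conjecture in the remark following this corollary, so your route runs through an open problem.

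The paper avoids this entirely by splitting into cases. For \(\alpha\in\mathbb{Q}\) the points lie on finitely many rays and an explicit empty half-infinite strip is exhibited directly. For irrational, badly-approximable \(\alpha\) it uses \Cref{cor:chabauty_limits_of_badly_approximable_fermat_spiral} (the limits are lattices, via Deloneness and \Cref{prop:delone_chabauty}) together with \Cref{prop:dense_forest_chabauty}, which is the part of your argument that is sound: every proper closed subgroup of \(\mathbb{R}^2\) does have an infinite empty strip. For irrational \(\alpha\) that is \emph{not} badly approximable it invokes Akiyama's result that \(X\) is then not relatively dense, so there are arbitrarily large empty balls and hence arbitrarily long empty \(\epsilon\times V\) rectangles --- no Chabauty analysis is needed in that case. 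To repair your proof you should replace the attempted exclusion of \(\mathbb{R}^2\) by this appeal to the failure of relative density; as written, the proposal does not establish the corollary for well-approximable irrational \(\alpha\).
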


\begin{proof}
Let \(\alpha\) be the angle difference. If \(\alpha\in \mathbb{Q}\) the points lie on a finite number of rays from the origin, and so for some \(\epsilon>0\) we can look at the half-infinite strip \([a,\infty)\times[\epsilon/2,\epsilon]\) and for large enough \(a\) it will not intersect any point. If \(\alpha\notin\mathbb{Q}\) is not badly-approximable, Akiyama showed in \cite{Akiyama_2020} that the Fermat spiral is not relatively dense, and if \(\alpha\notin\mathbb{Q}\) is badly-approximable, it follows from \Cref{cor:chabauty_limits_of_badly_approximable_fermat_spiral} and \Cref{prop:dense_forest_chabauty}.
\end{proof}

\begin{remark}
The author conjectures that for \(\alpha\) that is not badly-approximable the only non-empty Chabauty limits are isomorphic to \(\mathbb{R}\), \(\mathbb{Z}^2\), and \(\mathbb{Z}\times\mathbb{R}\), and in particular the limit cannot be \(\mathbb{R}^2\). If this is indeed the case, then we need not split the preceding proof into cases.
\end{remark}

\section{Proof of \Cref{thm:chabauty_limits_of_fermat_spiral}}
Throughout this section, we will represent points in the plane as both vectors and complex numbers.

Let \(X\) be a Fermat spiral and \(T_j\) be translations such that \(T_jX\) converges in the Chabauty topology to a non-empty set. Without loss of generality, we may assume that the limit has a point in the origin. Let \(v_1\) and \(v_2\) be two non-zero points in the limit, so we can have \(n_j, a_j, b_j\) such that \(T_jx_{n_j} \to 0, T_jx_{n_j+a_j} \to v_1\), and \(T_jx_{n_j+b_j} \to v_2\), so we have \(x_{n_j+a_j}-x_{n_j} \to v_1\) and \(x_{n_j+b_j}-x_{n_j} \to v_2\). Note that since \(T_j(0) \to \infty\) we have \(n_j\to\infty\). It is sufficient to show that \(x_{n_j+a_j+b_j}-x_{n_j} \to v_1+v_2\), or that 
\[x_{n_j+a_j+b_j}-x_{n_j+a_j} - (x_{n_j+b_j} - x_{n_j}) \to 0 .\]
Note that we can write 
\begin{align*}
    x_{t+b_j} - x_t 
    &= \sqrt{t+b_j}e^{2\pi i\alpha(t+b_j)} - \sqrt{t}e^{2\pi i\alpha t}\\
    &= \underbrace{(\sqrt{t+b_j}-\sqrt{t})e^{2\pi i\alpha(t+b_j)}}_{u_t^1} + 
        \underbrace{\sqrt{t}(e^{2\pi i\alpha(t+b_j)}-e^{2\pi i\alpha t})}_{u_t^2} .
\end{align*}
We want to show that \(u_{n_j+a_j}^i - u_{n_j}^i \to 0\) for \(i=1,2\).

For \(i=1\) we have that the angle of \(u_{n_j}^1\) is \(2\pi\alpha (n_j+b_j)\), while the angle of \(u_{n_j+a_j}^1\) is \(2\pi\alpha (n_j+a_j+b_j)\), so the difference in the angles is \(2\pi\alpha a_j\) (modulo \(2\pi\)) which must converge to \(0\) otherwise \(x_{n_j+a_j}-x_{n_j}\) will not converge. The norms are \(\sqrt{n_j+b_j}-\sqrt{n_j}\) and \(\sqrt{n_j+a_j+b_j}-\sqrt{n_j+a_j}\) respectively. Without loss of generality, assume \(|a_j| \leq |b_j|\) and \(|\sqrt{n_j+b_j}-\sqrt{n_j}| \leq c\), therefore
\[c^2 - 2c\sqrt{n_j} + n_j\leq n_j+b_j \leq c^2 + 2c\sqrt{n_j} + n_j\]
so we have \(b_j = O(\sqrt{n_j})\). Denote
\begin{align*}
    d_j
    &\coloneqq \sqrt{n_j+b_j}-\sqrt{n_j} - (\sqrt{n_j+a_j+b_j}-\sqrt{n_j+a_j}) \\
    &\leq \sqrt{n_j+|b_j|}-\sqrt{n_j} - \Bigl(\sqrt{n_j+2|b_j|}-\sqrt{n_j+|b_j|}\Bigr) \\
    &\leq \sqrt{n_j+c\sqrt{n_j}}-\sqrt{n_j} - \Bigl(\sqrt{n_j+2c\sqrt{n_j}}-\sqrt{n_j+c\sqrt{n_j}}\Bigr) .
\end{align*}
Note that \(\sqrt{x+c}-\sqrt{x} = c/(2\sqrt{x}) + O(1/x)\), so we have
\begin{align*}
    d_j
    &\leq {n_j}^{1/4}\biggl(\frac{c}{2}\cdot{n_j}^{-1/4} + O\bigl({n_j}^{-1/2}\bigr)\biggr) -
        {n_j}^{1/4}\biggl(\frac{c}{2}\cdot{n_j}^{-1/4} + O\bigl({n_j}^{-1/2}\bigr)\biggr) \\
    &= \frac{c}{2} - \frac{c}{2} + O\bigl({n_j}^{-1/4}\bigr) = o(1) .
\end{align*}
We can now write
\begin{align*}
    u^1_{n_j} - u^1_{n_j+a_j}
    &= (\sqrt{n_j+b_j}-\sqrt{n_j})e^{2\pi i\alpha(n_j+b_j)} - \\
    &\qquad (\sqrt{n_j+a_j+b_j}-\sqrt{n_j+a_j})e^{2\pi i\alpha(n_j+a_j+b_j)} \\
    &= \bigl((\sqrt{n_j+b_j}-\sqrt{n_j})(1-e^{2\pi i\alpha a_j}) + d_je^{2\pi i\alpha a_j}\bigr)e^{2\pi i\alpha(n_j+b_j)} ,
\end{align*}
and since \(\sqrt{n_j+b_j}-\sqrt{n_j}\) is bounded, \(e^{2\pi i\alpha a_j} \to 1\), and \(d_j \to 0\) we have that \(u^1_{n_j} - u^1_{n_j+a_j} \to 0\).

For \(i=2\) we have
\[u_t^2 = \sqrt{t}(e^{2\pi i\alpha(t+b_j)}-e^{2\pi i\alpha t}) = 
    \sqrt{t}e^{2\pi i\alpha t}(e^{2\pi i\alpha b_j} - 1) = x_t(e^{2\pi i\alpha b_j} - 1) ,\]
therefore
\begin{align*}
    \lim_{j\to\infty} u_{n_j+a_j}^2 - u_{n_j}^2
    &= \lim_{j\to\infty} (x_{n_j+a_j}-x_{n_j})(e^{2\pi i\alpha b_j} - 1) \\
    &= v_1 \lim_{j\to\infty} (e^{2\pi i\alpha b_j} - 1)
    = v_1(1-1) = 0 .
\end{align*}

To show that the Chabauty limit is closed under taking inverses, we can look at the points \(x_{n_j-a_j}\), and if \(x_{n_j-a_j}-x_{n_j}\) converges to \(v\), then from the argument above we have that \(v+v_1 = 0\), therefore \(-v_1 = v\) is in the Chabauty limit. Note that we have
\[\sqrt{n_j - a_j} - \sqrt{n_j} - (\sqrt{n_j} - \sqrt{n_j + a_j}) \to 0\]
since \(\sqrt{n_j + a_j} - \sqrt{n_j}\) converges, and this clearly implies that \(x_{n_j-a_j} - x_{n_j}\) converges. We have shown that the Chabauty limit is a subgroup, and by the definition of the Chabauty topology it must be closed.

\section{Properties of the Limit}
We now turn to studying the non-empty Chabauty limits of Fermat spirals as given by \Cref{thm:chabauty_limits_of_fermat_spiral} in the badly-approximable case. Let \(X\) be a Fermat spiral with a badly-approximable angle difference \(\alpha\) and let \(T_j\) be a sequence of translations such that \(T_jX\) converges in the Chabauty topology. Without loss of generality, we may assume that the limit has a point at the origin, so \(T_jX\) converges to a lattice \(\Lambda\). Let \(n_j\) be a sequence of indices such that \(T_jx_{n_j} \to 0\).

Let \(p_n/q_n\) be the continued fraction convergents of \(\alpha\). We can write \(q_{n+1} = a_nq_n + q_{n-1}\) for some positive natural numbers \(a_n\), which are the continued fraction expansion of \(\alpha=[a_1,a_2,\ldots]\). The number \(\alpha\) is badly-approximable if and only if the sequence \(a_n\) is bounded, therefore it is equivalent to the sequence \(q_{n+1}/q_n = a_n + q_{n-1}/q_n\) being bounded. Note we also have \(c_\alpha \leq q_n|q_n\alpha-p_n| \leq 1\), and so we can always pass to a converging subsequence. Two facts which will prove useful are \((q_n\alpha - p_n)(q_{n+1}\alpha - p_{n+1}) < 0\) and
\[q_n|q_{n+1}\alpha - p_{n+1}| + q_{n+1}|q_n\alpha - p_n| = 1 .\]
For a reference, see \cite{Cassels_1957}.

\begin{proposition}
\label{prop:closest_point}
The closest point to \(x_n \in X\) is \(x_{n-q}\) where \(p/q\) is some convergent of \(\alpha\).
\end{proposition}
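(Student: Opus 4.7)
My plan is to combine an a priori localization of the minimizer with the best-approximation property of continued fractions. The natural quantity to manipulate is the exact squared distance
\[|x_n - x_{n-k}|^2 = (\sqrt n - \sqrt{n-k})^2 + 4\sqrt{n(n-k)}\sin^2(\pi\|k\alpha\|),\]
in which \(\|k\alpha\|\) denotes the distance from \(k\alpha\) to the nearest integer; I will call the two summands the radial and angular parts. The case of \(x_{n+k}\) is entirely analogous.

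The first step is to show that the minimizer satisfies \(|k| = O(\sqrt n)\) and \(\|k\alpha\| = O(1/\sqrt n)\). Because \(\alpha\) is badly approximable, \(q_{j+1}/q_j\) is bounded, so for every \(n\) there is a convergent denominator \(q\) of order \(\sqrt n\) and \(\|q\alpha\|\) of order \(1/\sqrt n\), giving \(|x_n - x_{n-q}|^2 = O(1)\); the minimum distance is at most this, and the two positive summands above must each be \(O(1)\), forcing the claimed estimates on \(|k|\) and \(\|k\alpha\|\). In this regime Taylor expansion yields
\[|x_n - x_{n-k}|^2 = \frac{k^2}{4n} + 4\pi^2 n\|k\alpha\|^2 + O(n^{-1/2}),\]
whose leading expression I will call \(g(k)\). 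The second step is to rule out \(k\) not of convergent form: the classical best approximation theorem says that for such \(k\), with \(q\) the largest convergent denominator \(\leq |k|\), we have \(q < |k|\) and \(\|q\alpha\| < \|k\alpha\|\), so \(g(q) < g(k)\). To conclude the same for the exact squared distances I need the gap to exceed the \(O(n^{-1/2})\) error; I would sharpen the best approximation inequality to a multiplicative bound \(\|k\alpha\| \geq C\|q\alpha\|\) for some \(C > 1\) depending only on \(\alpha\), which gives \(g(k) - g(q) \geq 4\pi^2 n (C^2 - 1)\|q\alpha\|^2 = \Omega(1)\), comfortably dominating the approximation error and contradicting minimality.

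The main obstacle is the multiplicative bound \(\|k\alpha\| \geq C\|q\alpha\|\) for non-convergent \(k\). Non-convergent \(k\) that are far from any convergent have \(\|k\alpha\|\) bounded below by a fixed positive constant and so are already disqualified by Step 1 as candidates for the minimizer; what remains is to handle the intermediate convergents \(q_j + mq_{j-1}\) for \(1 \leq m < a_{j+1}\), where one computes \(\|(q_j + mq_{j-1})\alpha\|/\|q_j\alpha\|\) directly using the recurrence for the convergents and the sign alternation \((q_j\alpha - p_j)(q_{j-1}\alpha - p_{j-1}) < 0\), and then bounds the ratio away from \(1\) using the boundedness of the partial quotients \(a_j\).
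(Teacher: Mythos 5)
Your core argument --- localize the minimizer to \(|k| = O(\sqrt{n})\) and \(\|k\alpha\| = O(1/\sqrt{n})\), expand the squared distance as \(k^2/(4n) + 4\pi^2 n\|k\alpha\|^2 + O(n^{-1/2})\), and beat any non-convergent \(k\) by the largest convergent denominator \(q \leq |k|\) via a multiplicative strengthening \(\|k\alpha\| \geq C\|q\alpha\|\), \(C>1\), of the best-approximation property --- is sound, and is in fact more careful than the paper's own one-line comparison (which asserts that smaller radial and angular differences give a smaller distance, glossing over the fact that the angular contribution carries a radius prefactor that moves in the wrong direction). The multiplicative bound you isolate as the main obstacle is real and is most cleanly obtained from the standard fact that \(\|k\alpha\| \geq \|q_{j-1}\alpha\|\) for all \(1 \leq k < q_{j+1}\) with \(k \neq q_j\), combined with \(\|q_{j-1}\alpha\| = a_{j+1}\|q_j\alpha\| + \|q_{j+1}\alpha\|\) and the bound \(\|q_{j+1}\alpha\|/\|q_j\alpha\| \geq q_{j+1}/(q_{j+1}+q_{j+2})\), which is bounded below precisely because the partial quotients are bounded; your route through the intermediate convergents amounts to the same computation.

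There is, however, one genuine gap: your leading-order functional \(g(k)\) is invariant under \(k \mapsto -k\), so the argument as written only shows that the closest point is \(x_{n-q}\) \emph{or} \(x_{n+q}\) for some convergent denominator \(q\), whereas the proposition asserts it is the inward point \(x_{n-q}\). The inward/outward comparison lives entirely in the \(O(n^{-1/2})\) error term you discard:
\begin{align*}
|x_n - x_{n+q}|^2 - |x_n - x_{n-q}|^2
&= 4\bigl(\sqrt{n(n+q)}-\sqrt{n(n-q)}\bigr)\sin^2(\pi\|q\alpha\|) \\
&\qquad - \Bigl((\sqrt{n}-\sqrt{n-q})^2-(\sqrt{n+q}-\sqrt{n})^2\Bigr) \\
&= 4q\sin^2(\pi\|q\alpha\|) - \frac{q^3}{8n^2} + \text{lower order},
\end{align*}
and with \(q \asymp \sqrt{n}\), \(\|q\alpha\| \asymp 1/\sqrt{n}\) both competing terms are of order \(n^{-1/2}\), so the sign cannot be read off from \(g\) and requires a separate estimate (the paper disposes of it with the single sentence that the minimizing shift ``is negative since otherwise the reflected point is closer''). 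You should either carry out this comparison or weaken your conclusion to ``\(x_{n\pm q}\) for a convergent denominator \(q\).'' A smaller issue, shared with the paper, is that your conclusion is necessarily asymptotic in \(n\), while the proposition is stated unconditionally; this is harmless for the applications, where \(n = n_j \to \infty\).
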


\begin{proof}
Let \(a\) be the number such that \(x_{n+a}\) is the closest point to \(x_n\). Note that \(a\) is negative since otherwise the point \(x_{n-a}\) is closer. Let \(p/q\) be the convergent with largest denominator such that \(q \leq -a\), then we have that the difference in both the norm and the angle between \(x_{n-q}\) and \(x_n\) is less than the difference between \(x_{n+a}\) and \(x_n\).
\end{proof}

Let \(p_j/q_j\) be the convergent from \Cref{prop:closest_point} with respect to \(x_{n_j}\), and \(\tilde{p}_j/\tilde{q}_j\) the following convergent. Without loss of generality, we may pass to a subsequence such that the sequence
\[\bigl(\tilde{q}_j/q_j, q_j(q_j\alpha-p_j), \tilde{q}_j(\tilde{q}_j\alpha-\tilde{p}_j)\bigr)\]
converges to a point in \(\mathbb{R}^3\) and we will denote the limit by \((\beta, c, \tilde{c})\). We have \(\beta \geq 1\) and \(|c|, |\tilde{c}| \geq c_\alpha > 0\).

\begin{corollary}
\label{cor:shortest_vector}
One of the shortest non-zero vectors in \(\Lambda\) is the limit of a subsequence of \(x_{n_j+q_j} - x_{n_j}\).
\end{corollary}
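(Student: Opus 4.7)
The strategy is to pin down a shortest non-zero vector in $\Lambda$ via the closest-point relation in Proposition~\ref{prop:closest_point}, and then transfer it to a subsequential limit of $x_{n_j+q_j}-x_{n_j}$.

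First, since $\alpha$ is badly-approximable, $X$ is Delone by Akiyama's theorem, and hence so is $\Lambda$ (by Proposition~\ref{prop:delone_chabauty}); let $\lambda>0$ be the length of a shortest non-zero vector $v\in\Lambda$. The Chabauty-limit characterisation from Section~2 supplies indices $m_j$ with $T_jx_{m_j}\to v$, so $x_{m_j}-x_{n_j}\to v$ and $|x_{m_j}-x_{n_j}|\to\lambda$. By Proposition~\ref{prop:closest_point} the closest point of $X$ to $x_{n_j}$ is $x_{n_j-q_j}$, which gives
\[|x_{n_j-q_j}-x_{n_j}|\le|x_{m_j}-x_{n_j}|=\lambda+o(1).\]

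Next, I would show that $|x_{n_j+q_j}-x_{n_j}|-|x_{n_j-q_j}-x_{n_j}|\to 0$. Applying the law of cosines, the difference of the two squared norms is
\[2q_j-\frac{4q_j\sqrt{n_j}\cos(2\pi\alpha q_j)}{\sqrt{n_j+q_j}+\sqrt{n_j-q_j}}.\]
The bound from the first step forces $q_j=O(\sqrt{n_j})$ (the same argument showing $b_j=O(\sqrt{n_j})$ in the proof of Theorem~\ref{thm:chabauty_limits_of_fermat_spiral}), and since $p_j/q_j$ is a convergent we have $|q_j\alpha-p_j|<1/q_{j+1}$, so $\cos(2\pi\alpha q_j)=1+O(1/q_{j+1}^2)$. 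Substituting makes the above expression $o(1)$; uniform discreteness of $X$ bounds both norms away from zero, so dividing by their sum yields the desired $o(1)$ bound on the difference of norms. In particular, $x_{n_j+q_j}-x_{n_j}$ is bounded.

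Passing to a subsequence we get $x_{n_j+q_j}-x_{n_j}\to w$ for some $w\in\mathbb{R}^2$. Since $T_jx_{n_j+q_j}=T_jx_{n_j}+(x_{n_j+q_j}-x_{n_j})\to 0+w$ and $T_jX\to\Lambda$, the characterisation gives $w\in\Lambda$. By uniform discreteness $|w|\ge s>0$, while the above gives $|w|\le\lambda$; thus $|w|=\lambda$, so $w$ is a shortest non-zero vector of $\Lambda$, as required.

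The main obstacle is the asymptotic comparison in the second step. This is essentially a refinement of the $d_j\to 0$ computation in the proof of Theorem~\ref{thm:chabauty_limits_of_fermat_spiral}, but one must simultaneously exploit the smallness of $q_j\alpha-p_j$ coming from $p_j/q_j$ being a convergent, rather than treat $q_j$ as a generic gap size.
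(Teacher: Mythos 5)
Your proof is correct, but it takes a genuinely different route from the paper's. The paper obtains boundedness of \(x_{n_j-q_j}-x_{n_j}\) from relative density of the spiral, passes from the \(-q_j\) shift to the \(+q_j\) shift via the group-inverse argument already established in the proof of \Cref{thm:chabauty_limits_of_fermat_spiral} (so the two subsequential limits are automatically \(\mp v\)), and then gets minimality of \(\|v\|\) by applying \Cref{prop:closest_point} to an arbitrary nonzero lattice vector \(u\) realised as a limit \(x_{n_j+a_j}-x_{n_j}\to u\). You instead start from a shortest vector of the limit lattice \(\Lambda\) itself (using that \(\Lambda\) is Delone), invoke \Cref{prop:closest_point} only once to get \(\|x_{n_j-q_j}-x_{n_j}\|\le\lambda+o(1)\), and replace the group-structure step by an explicit law-of-cosines asymptotic showing \(\|x_{n_j+q_j}-x_{n_j}\|-\|x_{n_j-q_j}-x_{n_j}\|\to 0\). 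Both arguments work; the paper's is shorter because it recycles the additivity/inversion machinery of \Cref{thm:chabauty_limits_of_fermat_spiral}, while yours is more self-contained and makes the quantitative mechanism (the smallness of \(q_j\alpha-p_j\) played against \(q_j\asymp\sqrt{n_j}\)) explicit. Two details worth making explicit in your write-up: the error term \(O(q_j/q_{j+1}^2)\) in your expansion is \(o(1)\) only because \(q_j\to\infty\), which does follow from your first step (a bounded subsequence of \(q_j\) would force the transverse displacement, of order \(\sqrt{n_j}\,|q_j\alpha-p_j|\ge c_\alpha\sqrt{n_j}/q_j\), to blow up, contradicting \(\|x_{n_j-q_j}-x_{n_j}\|\le\lambda+o(1)\)); and the concluding inequality should read \(\|w\|\ge\lambda\) because \(w\in\Lambda\setminus\{0\}\) and \(\lambda\) is minimal, rather than merely \(\|w\|\ge s\).
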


\begin{proof}
The spiral set is relatively dense (see \cite{Akiyama_2020}), so there is \(0 < r\) such that for every \(j\) there is \(a_j\) such that \(\|x_{n_j} - x_{n_j+a_j}\| < r\), therefore, by the definition of \(q_j\), \(\|x_{n_j} - x_{n_j+q_j}\| < r\), and the sequence \(x_{n_j-q_j}-x_{n_j}\) must have a converging subsequence, say to a vector \(-v\neq 0\). Therefore, we have that a subsequence of  \(x_{n_j+q_j}-x_{n_j}\) converges to the vector \(v\). Let \(u\neq 0\) be some vector in the lattice, then there is a sequence \(a_j\) such that \(x_{n_j+a_j}-x_{n_j} \to u\), and by \Cref{prop:closest_point} for each \(j\) we have 
\[\|x_{n_j+a_j}-x_{n_j}\| \geq \|x_{n_j-q_j}-x_{n_j}\| ,\]
therefore \(\|u\| \geq \|-v\| = \|v\|\), so \(v\) must be one of the shortest non-zero vectors in \(\Lambda\).
\end{proof}

\begin{proposition}
\label{cor:following_convergent_vector_converges}
The sequence \(x_{n_j+\tilde{q}_j} - x_{n_j}\) has a converging subsequence.
\end{proposition}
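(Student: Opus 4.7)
The plan is to show that $|x_{n_j+\tilde q_j}-x_{n_j}|$ is bounded, after which a convergent subsequence exists by Bolzano--Weierstrass. The main tool I would introduce is the exact identity
\[|x_{n+k}-x_n|^2 = (\sqrt{n+k}-\sqrt{n})^2 + 4\sqrt{n(n+k)}\sin^2\bigl(\pi(\alpha k-m)\bigr),\]
valid for any integers $n,k\ge 0$ and $m$, obtained from $|x_{n+k}-x_n|^2=(n+k)+n-2\sqrt{n(n+k)}\cos(2\pi\alpha k)$ by writing $\alpha k=m+(\alpha k-m)$ and using $1-\cos\theta=2\sin^2(\theta/2)$. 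The key feature is that \emph{both} summands on the right are non-negative, so that boundedness of the total forces separate boundedness of each piece.

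First I would apply this identity with $k=q_j$, $m=p_j$. By \Cref{cor:shortest_vector} the vector $x_{n_j+q_j}-x_{n_j}$ converges along a subsequence, hence its norm is bounded. Non-negativity of both summands then gives, separately, $\sqrt{n_j+q_j}-\sqrt{n_j}=O(1)$, which (rewriting as $q_j/(\sqrt{n_j+q_j}+\sqrt{n_j})$) yields $q_j/\sqrt{n_j}=O(1)$; and $n_j\sin^2\bigl(\pi(q_j\alpha-p_j)\bigr)=O(1)$, which using $q_j\alpha-p_j\to 0$ and $\sin\theta\sim\theta$ gives $\sqrt{n_j}\,|q_j\alpha-p_j|=O(1)$.

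Next I would apply the identity with $k=\tilde q_j$, $m=\tilde p_j$, and transfer the two bounds. The first summand becomes $\bigl(\tilde q_j/(\sqrt{n_j+\tilde q_j}+\sqrt{n_j})\bigr)^2$, which is bounded because $\tilde q_j/q_j\to\beta$ is bounded and $q_j/\sqrt{n_j}$ is bounded. For the second summand, the standard fact that $|q_{n+1}\alpha-p_{n+1}|<|q_n\alpha-p_n|$ for successive convergents gives $\sqrt{n_j}\,|\tilde q_j\alpha-\tilde p_j|\le\sqrt{n_j}\,|q_j\alpha-p_j|=O(1)$, so $\sqrt{n_j(n_j+\tilde q_j)}\sin^2\bigl(\pi(\tilde q_j\alpha-\tilde p_j)\bigr)=O(1)$ as well (using $\tilde q_j/n_j=o(1)$). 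Summing the two non-negative pieces yields $|x_{n_j+\tilde q_j}-x_{n_j}|^2=O(1)$, and a convergent subsequence follows.

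I do not expect a real obstacle; the only non-routine ingredient is the identity that cleanly splits $|x_{n+k}-x_n|^2$ into a ``radial'' part $(\sqrt{n+k}-\sqrt n)^2$ and an ``angular'' part $4\sqrt{n(n+k)}\sin^2(\pi(\alpha k-m))$ with positive coefficients. Once this is in place, the argument is essentially a transfer of the $q_j$-estimates to the $\tilde q_j$-estimates via the two bounded ratios $\tilde q_j/q_j\to\beta$ and $|\tilde q_j\alpha-\tilde p_j|/|q_j\alpha-p_j|<1$.
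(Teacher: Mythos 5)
Your proof is correct and follows essentially the same route as the paper: reduce to boundedness, deduce \(q_j = O(\sqrt{n_j})\) from the convergence of \(x_{n_j+q_j}-x_{n_j}\), and transfer this to \(\tilde{q}_j\) via the bounded ratio \(\tilde{q}_j/q_j \to \beta\). Your exact splitting of \(|x_{n+k}-x_n|^2\) into non-negative radial and angular parts is a clean way to package the estimate, and it has the added benefit of making explicit the bound on the angular component (via \(|\tilde{q}_j\alpha-\tilde{p}_j| < |q_j\alpha-p_j|\) and \(\sqrt{n_j}|q_j\alpha-p_j|=O(1)\)), which the paper's one-line proof leaves implicit.
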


\begin{proof}
It is sufficient to show it is bounded. Since \(\sqrt{n_j+q_j}-\sqrt{n_j}\) converges we have \(q_j = 2t\sqrt{n_j} + o(\sqrt{n_j})\) for some \(t\), therefore \(\tilde{q}_j = 2t\beta\sqrt{n_j} + o(\sqrt{n_j})\), meaning \(\sqrt{n_j+\tilde{q}_j}-\sqrt{n_j} = t\beta + o(1)\).
\end{proof}

We assume from now on that \(x_{n_j+q_j} - x_{n_j}\) and \(x_{n_j+\tilde{q}_j} - x_{n_j}\) converge to the vectors \(v\) and \(\tilde{v}\) respectively, by passing to a subsequence if necessary.

\begin{proposition}
\label{prop:convergent_vectors_span_the_limit_lattice}
\(\Lambda\) is generated by the vectors \(v\) and \(\tilde{v}\).
\end{proposition}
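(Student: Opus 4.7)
The plan is to compute $|\det(v,\tilde v)|$ and $\operatorname{covol}(\Lambda)$ separately, show both equal $\pi$, and conclude from the index formula $[\Lambda : \mathbb{Z}v+\mathbb{Z}\tilde v] = |\det(v,\tilde v)|/\operatorname{covol}(\Lambda)$ that the index is $1$, which gives the claimed equality. This reduces the problem to making $v$ and $\tilde v$ explicit, after which the rest is two standard computations.

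First I would make $v$ and $\tilde v$ explicit. Pass to a further subsequence so that $e^{2\pi i\alpha n_j} \to e^{2\pi i\phi}$ and $\sqrt{n_j+q_j}-\sqrt{n_j}\to t$ for some $\phi\in\mathbb{R}$ and $t>0$; positivity of $t$ is forced, since otherwise $q_j=o(\sqrt{n_j})$ and the angular term $\sqrt{n_j}(e^{2\pi i\alpha q_j}-1) \sim 2\pi i\sqrt{n_j}c/q_j$ would blow up, contradicting the convergence of $v$. Using the splitting from the proof of \Cref{thm:chabauty_limits_of_fermat_spiral} together with the estimates $e^{2\pi i\alpha q_j}-1 = 2\pi i(q_j\alpha-p_j)+O((q_j\alpha-p_j)^2)$ and $\sqrt{n_j}(q_j\alpha-p_j) = (\sqrt{n_j}/q_j)\cdot q_j(q_j\alpha-p_j) \to c/(2t)$, one obtains
\[v = e^{2\pi i\phi}\bigl(t + \pi i c/t\bigr), \qquad \tilde v = e^{2\pi i\phi}\bigl(t\beta + \pi i \tilde c/(t\beta)\bigr),\]
the second formula coming from the same computation applied to $\tilde q_j$ and $\tilde q_j/(2\sqrt{n_j}) \to t\beta$.

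A short calculation then yields $\det(v,\tilde v) = \Im(\overline v\,\tilde v) = \pi(\tilde c/\beta - c\beta)$. The identity $q_j|\tilde q_j\alpha-\tilde p_j|+\tilde q_j|q_j\alpha-p_j|=1$ recalled above the proposition, passed to the limit via $\tilde q_j(q_j\alpha-p_j)\to\beta c$ and $q_j(\tilde q_j\alpha-\tilde p_j)\to\tilde c/\beta$, becomes $|\tilde c|/\beta + |c|\beta = 1$; combined with the sign condition $c\tilde c<0$ this gives $|\tilde c/\beta - c\beta|=1$, so $|\det(v,\tilde v)|=\pi$. In particular $v$ and $\tilde v$ are linearly independent, so $\mathbb{Z}v+\mathbb{Z}\tilde v$ is a full-rank sublattice of $\Lambda$ of covolume $\pi$.

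To close the argument, I would identify $\operatorname{covol}(\Lambda)=\pi$ from the density of $X$: the remark after \cref{eq:density} yields $\lim r^{-2}\#(B_r(0)\cap X)=1$, and since $X$ is Delone in the badly-approximable case and this local density is uniform over large translates (by equidistribution of $\{\alpha n\}$ on the relevant thin annuli), it is preserved under the Chabauty limit, so $\lim r^{-2}\#(B_r(0)\cap\Lambda)=1$ as well. For a lattice in $\mathbb{R}^2$ with covolume $V$ this limit equals $\pi/V$, forcing $V=\pi$ and hence $[\Lambda:\mathbb{Z}v+\mathbb{Z}\tilde v]=1$, as required. The main technical obstacle is the asymptotic expansion giving the explicit forms of $v$ and $\tilde v$; once that is in hand, the determinant identity and the density-covolume comparison are routine, and all three earlier propositions of the section feed directly into this final step.
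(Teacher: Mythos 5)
Your strategy is genuinely different from the paper's. The paper argues geometrically: it supposes some \(u\in\Lambda\) outside \(\mathbb{Z}v+\mathbb{Z}\tilde v\), reduces to \(u\) lying in the convex hull of \(\pm v,\pm\tilde v\) with approximating indices \(0\le a_j<\tilde q_j\), and then contradicts the best-approximation property of convergents (\(|a_j\alpha-b_j|\ge|q_j\alpha-p_j|\) for \(0\le a_j<\tilde q_j\)) by showing that such a \(u\) would force the angular discrepancy \(|a_j\alpha-b_j|\) to be strictly smaller than \(|q_j\alpha-p_j|\). Your first two steps --- the explicit limits \(v=e^{i\theta}(t+\pi ic/t)\), \(\tilde v=e^{i\theta}(\beta t+\pi i\tilde c/(\beta t))\) and the identity \(|\det(v,\tilde v)|=\pi|\tilde c/\beta-c\beta|=\pi\) --- are correct and are exactly the computations the paper performs afterwards (in part 1 of the proof of \Cref{thm:catergorisation_of_badly_approximable_limits} and in \Cref{prop:co_volume_of_limit_lattice}). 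So the index-formula reduction is sound \emph{provided} you can show \(\operatorname{covol}(\Lambda)=\pi\) independently of the present proposition; note that you cannot invoke \Cref{prop:co_volume_of_limit_lattice} for this, since its proof in the paper presupposes that \(\Lambda\) is generated by \(v\) and \(\tilde v\).

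That independent covolume computation is where your proposal has a genuine gap. The global statement \(\lim_{r\to\infty} r^{-2}\#(B_r(0)\cap X)=1\) says nothing by itself about Chabauty limits, which recentre at points \(x_{n_j}\to\infty\); what is needed is a \emph{local} count, \(\#(B_r(x_{n_j})\cap X)=r^2+o(r^2)\) uniformly as \(j\to\infty\), which one then transfers to \(\Lambda\) using uniform discreteness. You flag this (``by equidistribution of \(\{\alpha n\}\) on the relevant thin annuli'') but do not prove it, and it is not routine: the points of \(X\) in \(B_r(x)\) with \(|x|=R\) are indexed by an interval of length about \(4rR\), on which one must show that the angle \(\alpha n\bmod 1\) and the radius \(\sqrt n\) are jointly equidistributed against the chord-length profile of the ball, e.g.\ by splitting the index range into blocks of length \(o(R)\) tending to infinity and applying a discrepancy bound on each. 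This can be done (it is essentially the content of the Yamagishi result cited in the remark after \Cref{prop:co_volume_of_limit_lattice}), but as written it is the hardest step of your argument and is asserted rather than proved. The paper's sector argument avoids the covolume entirely and needs only \Cref{prop:closest_point} and the best-approximation inequality, so it is substantially shorter once the reduction to the fundamental parallelogram is made.
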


\begin{proof}
Denote by \(A\) the convex hull of \(\pm v\) and \(\pm \tilde{v}\). Assume that the lattice is not spanned by \(v\) and \(\tilde{v}\), so there is a vector \(u \in \Lambda\) that is not spanned by them, and let \(a_j\) be a sequence such that \(x_{n_j+a_j}-x_{n_j} \to u\). Without loss of generality, we may assume that \(u\) is in the basic parallelogram with the vertices at \(0\), \(v\), \(\tilde{v}\), and \(v+\tilde{v}\). If \(u \notin A\), then it is in the triangle with its vertices at \(v\), \(\tilde{v}\), and \(v+\tilde{v}\), therefore \(u-(v+\tilde{v}) \in A\). This means that without loss of generality we may assume that \(u \in A\) and we may also assume that \(a_j > 0\) because otherwise we can take \(x_{n_j-a_j} - x_{n_j} \to -u \in A\).

Since \(\|v\| \leq \|\tilde{v}\|\) and \(u \in A\setminus\{\pm v, \pm\tilde{v}\}\) we have \(\|u\| < \|\tilde{v}\|\), therefore, for \(j\) large enough we have
\[\|x_{n_j+a_j}\| \leq \|x_{n_j+a_j} - x_{n_j}\| + \|x_{n_j}\| <
    \|x_{n_j+\tilde{q}_j} - x_{n_j}\| + \|x_{n_j}\| \leq \sqrt{n_j+\tilde{q}_j} ,\]
therefore \(0 \leq a_j < \tilde{q}_j\). By the definition of convergents, we have \(|a_j\alpha - b_j| \geq |q_j\alpha - p_j|\) where \(b_j\) is the nearest integer to \(a_j\alpha\).

Denote by \(A_j\) the convex hull of \(x_{n_j\pm q_j}\) and \(x_{n_j\pm\tilde{q}_j}\), and by \(B_j\) the sector
\[B_j = \{x\in\mathbb{R}^2 \;|\; \text{the angle between \(x\) and \(x_{n_j}\) is at most \(2\pi|\alpha q_j-p_j|\)}\} .\]
Note that \(T_jA_j\to A\) in the Chabauty topology and \(A_j\setminus\{x_{n_j\pm q_j}\} \subseteq \interior B_j\), therefore we must conclude that for \(j\) large enough \(x_{n_j+a_j} \in \interior B_j\), meaning that the angle difference between \(x_{n_j+a_j}\) and \(x_{n_j}\) is strictly less than \(2\pi|\alpha q_j - p_j|\), however, it is clearly \(2\pi|\alpha a_j-b_j|\). We have \(|\alpha a_j-b_j| < |\alpha q_j - p_j|\), which is a contradiction.
\end{proof}

We now turn to prove \Cref{thm:catergorisation_of_badly_approximable_limits}. The proof will be presented in two parts, the first that every Chabauty limit of \(X\) is of the specified form, and later that every lattice of the specified form is indeed achieved.

\begin{proof}[Proof of \Cref{thm:catergorisation_of_badly_approximable_limits} (part 1)]
Assume the lattice \(\Lambda\) is a Chabauty limit of \(X\), then it is spanned by the vectors \(v\) and \(\tilde{v}\). Denote
\begin{align*}
    v_j^1 = (\sqrt{n_j+q_j} - \sqrt{n_j})e^{2\pi i\alpha n_j} &, \qquad
    v_j^2 = \sqrt{n_j+q_j}(e^{2\pi i\alpha (n_j+q_j)} - e^{2\pi i\alpha n_j}) \\
    \tilde{v}_j^1 = (\sqrt{n_j+\tilde{q}_j} - \sqrt{n_j})e^{2\pi i\alpha n_j} &, \qquad
    \tilde{v}_j^2 = \sqrt{n_j+\tilde{q}_j}(e^{2\pi i\alpha (n_j+\tilde{q}_j)} - e^{2\pi i\alpha n_j}) ,
\end{align*}
and we have \(v_j^1+v_j^2 \to v\) and \(\tilde{v}_j^1 + \tilde{v}_j^2 \to \tilde{v}\). Recall that we assumed
\[\bigl(\tilde{q}_j/q_j, q_j(q_j\alpha-p_j), \tilde{q}_j(\tilde{q}_j\alpha-\tilde{p}_j)\bigr) \to (\beta, c, \tilde{c}) .\]
We may also assume, by passing to a subsequence, that \(e^{2\pi i\alpha n_j}\) converges to \(e^{i\theta}\), and denote \(R = \left[\begin{smallmatrix}\cos(\theta)&-\sin(\theta)\\\sin(\theta)&\cos(\theta)\end{smallmatrix}\right]\), that is, the rotation matrix by \(\theta\). This means that \(v_j^1\) converges to \(te^{i\theta}\) for some \(t > 0\) (\(t \neq 0\) because \(X\) is Delone), therefore \(q_j = 2t\sqrt{n_j} + o(\sqrt{n_j})\) and \(\tilde{q}_j = 2\beta t\sqrt{n_j} + o(\sqrt{n_j})\), which means that \(\tilde{v}_j^1\) converges to \(\beta t e^{i\theta}\).

We can now compute \(v_j^2\), for which we have
\begin{align*}
    \lim_{j\to\infty} v_j^2 =
    \lim_{j\to\infty} e^{2\pi i\alpha n_j} \sqrt{n_j+q_j}(e^{2\pi i\alpha q_j} - 1) = 
    e^{i\theta}\cdot \lim_{j\to\infty} \sqrt{n_j+q_j}(e^{2\pi i\alpha q_j} - 1) .
\end{align*}
Looking at the real and imaginary parts, using \(q_j\alpha -p_j = c/q_j + o(1/q_j)\), we get:
\begin{align*}
    &\mkern-36mu \Re \sqrt{n_j+q_j}(e^{2\pi i\alpha q_j} - 1)
    = \sqrt{n_j+q_j}\cos(2\pi\alpha q_j) \\
    &= \sqrt{n_j+q_j} O\biggl(\frac{1}{{q_j}^2}\biggr)
    = \sqrt{\frac{n_j}{{q_j^2}}+\frac{1}{q_j}} O\biggl(\frac{1}{q_j}\biggr)\\
    &= \biggl(\frac{n_j}{q_j} + o(1)\biggr) O\biggl(\frac{1}{q_j}\biggr)
    = (2t + o(1)) o(1) = o(1) \\
    &\mkern-36mu \Im \sqrt{n_j+q_j}(e^{2\pi i\alpha q_j} - 1)
    = \sqrt{n_j+q_j}\sin(2\pi\alpha q_j) \\
    &= \sqrt{n_j+q_j} \biggl(2\pi(\alpha q_j-p_j) + O\biggl(\frac{1}{{q_j}^3}\biggr)\biggr) \\
    &= \sqrt{\frac{n_j}{{q_j}^2} + \frac{1}{q_j}} \biggl(2\pi c + o(1) + O\biggl(\frac{1}{{q_j}^2}\biggr)\biggr) \\
    &= \biggl(\frac{\sqrt{n_j}}{q_j} + o(1)\biggr)\cdot (2\pi c + o(1))
    = \frac{1}{2t}\cdot 2\pi c + o(1) = \frac{\pi c}{t} + o(1) ,
\end{align*}
which gives \(v_j^2 \to \pi c/t\cdot ie^{i\theta}\), and similarly we have \(\tilde{v}_j^2 \to \pi\tilde{c}/(\beta t)ie^{i\theta}\). This means that in vector notation we have
\begin{align*}
    v = \lim v_j^1+v_j^2 = R\begin{bmatrix}t \\ \pi c/t \end{bmatrix} &&
    \tilde{v} = \lim \tilde{v}_j^1+\tilde{v}_j^2 = R \begin{bmatrix}\beta t \\ \pi\tilde{c}/(\beta t)\end{bmatrix} ,
\end{align*}
therefore
\[\Lambda = R \begin{bmatrix}t & \beta t \\ \pi c/t & \pi\tilde{c}/(\beta t) \end{bmatrix}\mathbb{Z}^2 .\]
By dividing both rows by \(\sqrt{\pi}\) we get
\[\Lambda = \sqrt{\pi} R \begin{bmatrix}t/\sqrt{\pi} & \beta t/\sqrt{\pi} \\ 
    c \sqrt{\pi}/t & \tilde{c}/\beta\cdot\sqrt{\pi}/t \end{bmatrix}\mathbb{Z}^2 =
    \sqrt{\pi} R A_{t/\sqrt{\pi}}\begin{bmatrix}1 & \beta \\ c & \tilde{c}/\beta \end{bmatrix} \mathbb{Z}^2 .\]
This finishes the proof that all the Chabauty limits are of the specified form.
\end{proof}

\begin{definition}
The co-volume of the lattice \(v\mathbb{Z}+\tilde{v}\mathbb{Z}\) is the volume of the fundamental parallelogram of the lattice, that is \(|\det(v, \tilde{v})|\).
\end{definition}

\begin{proposition}
\label{prop:co_volume_of_limit_lattice}
\(\Lambda\) has co-volume of \(\pi\).
\end{proposition}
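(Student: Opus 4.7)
The plan is to read off the co-volume directly from the explicit formula for $\Lambda$ established in part 1 of the proof of \Cref{thm:catergorisation_of_badly_approximable_limits}, and then use the standard identity relating consecutive convergents to collapse the resulting expression to $\pi$. Since $\Lambda$ is generated by the vectors $v = R(t, \pi c/t)^\top$ and $\tilde v = R(\beta t, \pi \tilde c/(\beta t))^\top$ (by \Cref{prop:convergent_vectors_span_the_limit_lattice}), and rotations preserve determinants, the co-volume is
\[
|\det(v,\tilde v)| = \left| t \cdot \frac{\pi\tilde c}{\beta t} - \beta t \cdot \frac{\pi c}{t} \right| = \pi \left| \frac{\tilde c}{\beta} - \beta c \right|.
\]

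The remaining task is to show that $|\tilde c/\beta - \beta c| = 1$. For this I would invoke the two facts about convergents mentioned in the introduction to the section: that $(q_j\alpha - p_j)(\tilde q_j\alpha - \tilde p_j) < 0$, and that
\[
q_j|\tilde q_j\alpha - \tilde p_j| + \tilde q_j|q_j\alpha - p_j| = 1.
\]
Rewriting the left-hand side as
\[
\frac{1}{\tilde q_j/q_j} \cdot \tilde q_j|\tilde q_j\alpha - \tilde p_j| + (\tilde q_j/q_j) \cdot q_j|q_j\alpha - p_j|
\]
and passing to the limit along our subsequence yields $|\tilde c|/\beta + \beta|c| = 1$. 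The sign fact guarantees that $\tilde c/\beta$ and $\beta c$ have opposite signs, so $|\tilde c/\beta - \beta c| = |\tilde c|/\beta + \beta|c| = 1$, whence the co-volume is exactly $\pi$.

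There is no real obstacle here; the only thing to be a bit careful about is that the signs of $c$ and $\tilde c$ are genuinely opposite in the limit (which is forced because the strict sign inequality at each stage persists after passing to the limit, together with the lower bound $|c|,|\tilde c| \geq c_\alpha > 0$ ensuring neither limit is zero), so that the absolute value on $\tilde c/\beta - \beta c$ splits as a sum rather than a difference.
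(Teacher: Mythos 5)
Your proposal is correct and follows essentially the same route as the paper: compute the determinant of the explicit generator matrix to get $\pi\left|\tilde c/\beta - \beta c\right|$, then combine the identity $q_j|\tilde q_j\alpha - \tilde p_j| + \tilde q_j|q_j\alpha - p_j| = 1$ with the opposite-sign property of consecutive convergents to conclude this equals $\pi$. Your remark that $|c|,|\tilde c|\geq c_\alpha>0$ is needed to ensure the signs remain strictly opposite in the limit is a nice touch of care that the paper glosses over.
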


\begin{remark}
In the definition of a dense forest, we required that for a set \(X\) we have
\[\limsup_{r\to\infty} \frac{\# (B_r(0)\cap X)}{r^d} < \infty ,\]
which is called the \emph{asymptotic density} of \(X\). Note that in the case that \(X\) is a lattice, the asymptotic density is \(1\) exactly when the co-volume is \(\pi\). Essentially, what this proposition tells us is that the asymptotic density of a Fermat spiral with a badly-approximable angle difference has the same asymptotic density as its Chabauty limits.
\end{remark}

\begin{remark}
\Cref{prop:co_volume_of_limit_lattice} clearly follows from the main result is given in \cite{Yamagishi_2021}, where they prove that the area of Voronoi cells converges to \(\pi\) (see the paper for definitions). However, we give a brief proof for completeness.
\end{remark}

\begin{proof}
We know every Chabauty limit is a lattice of the form
\[\sqrt{\pi}RA_t \begin{bmatrix}1&\beta\\c&\tilde{c}/\beta\end{bmatrix} \mathbb{Z}^2 ,\]
therefore the co-volume of it is 
\[\left|\det\left(\sqrt{\pi}RA_t \begin{bmatrix}1&\beta\\c&\tilde{c}/\beta\end{bmatrix}\right)\right| = 
    \pi \left|\frac{\tilde{c}}{\beta} - \beta c \right| .\]
Note that we have 
\begin{align*}
    \frac{\tilde{c}}{\beta} - c\beta
    &= \bigl(\tilde{q}_j(\tilde{q}_j\alpha - \tilde{p}_j) + o(1)\bigr)\biggl(\frac{q_j}{\tilde{q}_j} + o(1)\biggr) - \\
    &\qquad \bigl(q_j(q_j\alpha - p_j) + o(1)\bigr)\biggl(\frac{\tilde{q}_j}{q_j} + o(1)\biggr) \\
    &= q_j(\tilde{q}_j\alpha - \tilde{p}_j) - \tilde{q}_j(q_j\alpha - p_j) + o(1) ,
\end{align*}
and since \(q_j\alpha - p_j\) and \(\tilde{q}_j\alpha - \tilde{p}_j\) have opposite signs we have
\begin{align*}
    \biggl|\frac{\tilde{c}}{\beta} - c\beta\biggr|
    &= |q_j(\tilde{q}_j\alpha - \tilde{p}_j) - \tilde{q}_j(q_j\alpha - p_j)| + o(1) \\
    &= q_j|\tilde{q}_j\alpha - \tilde{p}_j| + \tilde{q}_j|q_j\alpha - p_j| + o(1) = 1 + o(1) ,
\end{align*}
therefore, the co-volume is \(\pi\).
\end{proof}

We will show that all lattices of the specified form are indeed achieved as Chabauty limits.
\begin{proof}[Proof of \Cref{thm:catergorisation_of_badly_approximable_limits} (Part 2)]
Let \(p_j/q_j\) be some subsequence of the convergents of \(\alpha\) such that
\[\bigl(\tilde{q}_j/q_j, q_j(q_j\alpha-p_j), \tilde{q}_j(\tilde{q}_j\alpha-\tilde{p}_j)\bigr) \to (\beta, c, \tilde{c}) .\]
First, we will show that for each \(0 < t\) there is a limit which is the lattice \(\sqrt{\pi}R'A_{t/\sqrt{\pi}}\left[\begin{smallmatrix}1&\beta\\c&\tilde{c}/\beta\end{smallmatrix}\right]\), for some rotation matrix \(R'\); second, we will show that \(R'\) can be replaced by any rotation matrix.

Define \(n_j = aq_j\tilde{q}_j\) for \(a = 1/(4t^2\beta)\), and we can assume that \(e^{2\pi\alpha in_j}\) converges to some \(e^{i\theta'}\) by passing to a subsequence. Denote \(R' = \left[\begin{smallmatrix}\cos(\theta')&-\sin(\theta')\\\sin(\theta')&\cos(\theta')\end{smallmatrix}\right]\). Note that we have
\[\frac{\sqrt{n_j}}{q_j} = 
    \frac{\sqrt{aq_j\tilde{q}_j + O(1)}}{q_j} = 
    \sqrt{a\cdot\frac{\tilde{q}_j}{q_j} + O\biggl(\frac{1}{q_j}\biggr)} = 
    \sqrt{\frac{\beta}{4t^2\beta} + o(1)} = \frac{1}{2t} + o(1),\]
therefore \(q_j = 2t\sqrt{n_j} + o(\sqrt{n_j})\). We can now compute \(x_{n_j+q_j} - x_{n_j}\):
\begin{align*}
    &\mkern-36mu \Re e^{-2\pi i\alpha n_j}(x_{n_j+q_j} - x_{n_j})
    = \Re \sqrt{n_j+q_j} e^{2\pi i\alpha q_j} - \sqrt{n_j} \\
    &= \sqrt{n_j+q_j}\biggl(1 - O\biggl(\frac{1}{{q_j}^2}\biggr)\biggr) - \sqrt{n_j} \\
    &= \sqrt{n_j+q_j} - \sqrt{n_j} + \sqrt{\frac{n_j}{{q_j}^2} + \frac{1}{q_j}}\cdot o(1) \\
    &= t + o(1) + \biggl(\frac{1}{2t} + o(1)\biggr)\cdot o(1) = t + o(1) \\
    &\mkern-36mu \Im e^{-2\pi i\alpha n_j}(x_{n_j+q_j} - x_{n_j})
    = \Im \sqrt{n_j+q_j} e^{2\pi i\alpha q_j} - \sqrt{n_j} \\
    &= \sqrt{n_j+q_j}\sin(2\pi\alpha q_j) = \sqrt{n_j+q_j} \biggl(2\pi(q_j\alpha-p_j) + O\biggl(\frac{1}{{q_j}^3}\biggr)\biggr) \\
    &= \sqrt{\frac{n_j}{{q_j}^2} + \frac{1}{q_j}} \biggl(2\pi c + O\biggl(\frac{1}{{q_j}^2}\biggr)\biggr)
    = \biggl(\frac{1}{2t} + o(1)\biggr) (2\pi c + o(1)) = \frac{\pi c}{t} + o(1) ,
\end{align*}
therefore
\[x_{n_j+q_j} - x_{n_j} \to R' \begin{bmatrix} t \\ \pi c/t\end{bmatrix} = v' ,\]
and similarly
\[x_{n_j+\tilde{q}_j} - x_{n_j} \to R' \begin{bmatrix} \beta t \\ \pi \tilde{c}/(\beta t)\end{bmatrix} = \tilde{v}' .\]

This means that there is a converging subsequence of \(T_j'X\) in the Chabauty topology, where \(T_j'(x) = x - x_{n_j}\), and the limit must be a lattice that a sublattice that is generated by \(v'\) and \(\tilde{v}'\). By \Cref{prop:co_volume_of_limit_lattice} the limit lattice has a co-volume of \(\pi\), and by the same calculations, this is the co-volume of the lattice generated by \(v'\) and \(\tilde{v}'\), therefore the limit lattice is generated by \(v'\) and \(\tilde{v}'\). We proved that the lattice
\[\Lambda = R' \begin{bmatrix} t & \beta t \\ \pi c/t & \pi\tilde{c}/(\beta t)\end{bmatrix} =
    \sqrt{\pi} R' A_{t/\sqrt{\pi}} \begin{bmatrix} 1 & \beta \\ c & \tilde{c}/\beta\end{bmatrix} \]
is achieved as a Chabauty limit of \(X\).

Fix
\[\Lambda = \sqrt{\pi} R A_{t/\sqrt{\pi}} \begin{bmatrix} 1 & \beta \\ c & \tilde{c}/\beta\end{bmatrix} ,\]
where \(R\) is a rotation matrix. Consider \(n_j + b\) for some positive integer \(b\). It is clear from the calculations above that the Chabauty limit \(\Lambda^b\) of \(T_j^bX\), where \(T_j^b(x) = x - x_{n_j+b}\), is the lattice spanned by
\begin{align*}
    \Lambda^b = \sqrt{\pi} \begin{bmatrix}\cos(2\pi \alpha b) & -\sin(2\pi \alpha b) \\
        \sin(2\pi \alpha b) & \cos(2\pi \alpha b)\end{bmatrix} R' A_{t/\sqrt{\pi}} 
    \begin{bmatrix} t/\sqrt{\pi} &\beta t/\sqrt{\pi} \\ c \sqrt{\pi}/t & \tilde{c}/\beta \cdot \sqrt{\pi}/t\end{bmatrix}
\end{align*}
It is also clear that \(e^{2\pi i\alpha b}\) is dense in \(\mathbb{S}^1\) since \(\alpha\) is irrational, which concludes the proof since the set of Chabauty limits is closed.
\end{proof}

\section*{Further Questions}
The author conjectures that there is a spiral set that is a dense forest, this is in contrast to a conjecture of Adiceam and Tsokanos in \cite{Adiceam_2022}. A possible avenue to consider is noting that if all of the Chabauty limits of a set are dense forests, the set is also a dense forest (by \Cref{prop:dense_forest_chabauty}), and then constructing a spiral set whose limits are similar to the construction given by Bishop in \cite{Bishop_2011} using the lattice limits given by Fermat spirals. If there are indeed spiral sets that are dense forests, one might ask what is the best visibility function, and in particular whether there is a Danzer spiral set.

We saw a classification of all of the lattices that can be achieved, however, this classification is quite difficult to use. In particular, the question whether every lattice with co-volume \(\pi\) is achieved as a Chabauty limit of some Fermat spiral with badly-approximable angle difference remains.

A third question is what are the Chabauty limits of Fermat spirals with irrational angle difference that is not badly-approximable. As noted above, the author conjectures that the limits are isomorphic to \(\emptyset\), \(\mathbb{R}\), \(\mathbb{Z}^2\), and \(\mathbb{Z}\times\mathbb{R}\). Computational evidence suggests they are all achieved, but a full classification could be interesting, and in particular do the lattices also have co-volume \(\pi\).

\section*{Acknowledgement}
This paper is part of the Author's M.Sc. thesis at Tel Aviv University, conducted under the supervision of Barak Weiss. The financial support of grant ISF-NSFC 3739/21 is gratefully acknowledged.
\printbibliography

\end{document}